\newtheorem{thm}{Theorem}
\newtheorem{lem}[thm]{Lemma}
\theoremstyle{definition}
\newtheorem{defn}[thm]{Definition}
\newtheorem{exa}[thm]{Example}
\newcommand{\declarecategory}[2]{\expandafter\newcommand\csname#1\endcsname{\ensuremath{\mathsf{#2}}}}
\newcommand{\CatC}{\mathcal{C}}
\newcommand{\FinKer}{\mathsf{FinKer}}
\newcommand{\ProDet}{\mathsf{ProDet}}
\newcommand{\Det}{\mathsf{det}}
\newcommand{\Pro}{\mathsf{Pro}}
\newcommand{\id}{\mathsf{id}}
\newcommand{\samp}{\mathsf{samp}}
\newcommand{\Kleisli}{\mathsf{Kleisli}}
\newcommand{\Radon}{R}
\newcommand{\op}{{}^\mathsf{op}}
\newcommand{\interval}{\ensuremath{\mathbb I}}
\newcommand{\two}{\mathbbm 2}
\newcommand{\kerto}{\rightsquigarrow}
\DeclareMathOperator{\riesz}{\mathsf{M}}
\newcommand{\copymor}{\mathsf{copy}}
\begin{document}
\title{A causal Markov category with Kolmogorov products}
\author{Sean Moss and Sam Staton}

\date{Notes from October 2021, compiled \today}

\maketitle

In
\cite{fritz-rischel-infinite-products-and-zero-one-laws-in-categorical-probability}
(Problem 6.7) the problem was posed of finding an interesting Markov category which is \emph{causal} and has all (small) \emph{Kolmogorov products}.
Here we give an example where the deterministic subcategory is the category of Stone spaces (i.e.\ the dual of the category of Boolean algebras) and the kernels correspond to a restricted class of Kleisli arrows for the Radon monad.

We look at this from two perspectives. First via pro-completions and Stone spaces directly. Second via duality with Boolean and algebras and effect algebras.

\section{Perspective in terms of pro-completions}
Let $\CatC$ be a Markov category, with deterministic wide subcategory
$\Det(\CatC)$. A motivating example is $\CatC=\FinKer$, so that
$\Det(\CatC)=\fin$, and $\Pro(\Det(\CatC))$ is the category of Stone
spaces.

We often write `$X\to X'$' for an object of $\Pro(\CatC)$, as a shorthand for $X\cong \lim_{X\to X'} X'$, where the $X'$s are in $\CatC$. 

Let $\ProDet(\CatC)$ be the full subcategory of the profinite
completion $\Pro(\CatC)$ given by limits of cofiltered diagrams that factorize through $\Det(\CatC)$.  
Equivalently, $\ProDet(\CatC)$ is the full image of
\[
  \Pro(\Det(\CatC)) \longrightarrow \Pro(\CatC),
\]
where $\Pro(\Det(\CatC))$ is cartesian monoidal.  
$\ProDet(\CatC)$ is monoidal with
\[
  X \otimes Y = \lim_{\substack{X \to X' \\ Y \to Y' \\ \text{in } \Pro(\Det(\CatC))}} X' \otimes Y',
\]
i.e.\ the cartesian product $X \times Y$ in $\Pro(\Det(\CatC))$.  
Every $X \in \ProDet(\CatC)$ has a copy structure given by
\[
  X \longrightarrow X' \xrightarrow{\mathrm{copy}} X' \otimes X'
\]
for each $X \to X'$ in $\Pro(\Det(\CatC))$.

\begin{lem}
$\ProDet(\CatC)$ is a Markov category with deterministic subcategory $\Pro(\Det(\CatC))$.
\end{lem}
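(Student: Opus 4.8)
The plan is to transport both the monoidal and the comonoid structure from the ambient pro-completion, using Fox's theorem to pin down which morphisms are deterministic. First I would recall that in any Markov category the deterministic subcategory is cartesian monoidal (Fox's theorem): $\Det(\CatC)$ has finite products with $X'\otimes Y'=X'\times Y'$ and unit the terminal object $I$. Consequently $\Pro(\Det(\CatC))$ is cartesian monoidal, with products computed levelwise exactly as in the displayed formula for $X\otimes Y$. I would then check that the symmetric monoidal structure of $\Pro(\CatC)$ (the pro-completion of a symmetric monoidal category is again symmetric monoidal) restricts to $\ProDet(\CatC)$: if $X=\lim_i X_i$ and $Y=\lim_j Y_j$ have deterministic transition maps, then $X\otimes Y=\lim_{i,j}X_i\otimes Y_j$ again has deterministic transitions, since $\Det(\CatC)$ is closed under $\otimes$; hence $X\otimes Y$ is pro-deterministic and coincides with the cartesian product of $X$ and $Y$ in $\Pro(\Det(\CatC))$. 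The associator, unitors and symmetry restrict from $\Pro(\CatC)$ and agree with the cartesian ones, because they already agree on $\Det(\CatC)$.

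Next I would install the comonoids. For each object the copy and delete maps are the diagonal and the projection to the terminal object in the cartesian category $\Pro(\Det(\CatC))$; the map displayed in the statement is precisely the $(X',X')$-component of this diagonal. These are morphisms of $\Pro(\Det(\CatC))\subseteq\Pro(\CatC)$, hence of $\ProDet(\CatC)$, and all the comonoid laws together with their compatibility with $\otimes$ hold automatically, being instances of the cartesian comonoid equations. The one remaining Markov axiom is that $I$ is terminal. I would verify this already in $\Pro(\CatC)$: for $X=\lim_i X_i$ one has $\Pro(\CatC)(X,I)=\colim_i\CatC(X_i,I)$, a filtered colimit of singletons since $I$ is terminal in $\CatC$, hence itself a singleton. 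Thus $I$ is terminal in $\Pro(\CatC)$ and a fortiori in the full subcategory $\ProDet(\CatC)$, the unique map $X\to I$ being $\mathrm{del}_X$; in particular every morphism is discardable.

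It then remains to identify the deterministic subcategory with $\Pro(\Det(\CatC))$. One inclusion is immediate: in any cartesian category every morphism commutes with the diagonal, so every morphism of $\Pro(\Det(\CatC))$ is deterministic, and the embedding into $\Pro(\CatC)$ preserves copy. For the converse, which is the crux, I would take a deterministic $f\colon X\to Y$ in $\ProDet(\CatC)$ with $Y=\lim_j Y_j$ and argue componentwise. Each component $f_j\in\Pro(\CatC)(X,Y_j)=\colim_i\CatC(X_i,Y_j)$ is represented by some $g\colon X_{i_0}\to Y_j$, and projecting the equation $\copymor_Y\circ f=(f\otimes f)\circ\copymor_X$ to $Y_j\otimes Y_j$ gives $\copymor_{Y_j}\circ f_j=(f_j\otimes f_j)\circ\copymor_X$ in $\colim_i\CatC(X_i,Y_j\otimes Y_j)$. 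Unwinding this equality of filtered-colimit classes yields a transition map $t\colon X_{i_1}\to X_{i_0}$ with $\copymor_{Y_j}\circ g\circ t=(g\otimes g)\circ\copymor_{X_{i_0}}\circ t$; since $t$ is deterministic, $\copymor_{X_{i_0}}\circ t=(t\otimes t)\circ\copymor_{X_{i_1}}$, so $g':=g\circ t$ satisfies $\copymor_{Y_j}\circ g'=(g'\otimes g')\circ\copymor_{X_{i_1}}$, i.e.\ $g'$ is deterministic in $\CatC$ and still represents $f_j$. Hence every $f_j$ lies in the image of $\colim_i\Det(X_i,Y_j)$.

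To finish I would assemble these level-wise deterministic representatives into a single arrow of $\Pro(\Det(\CatC))$. Because the transition maps of $X$ are deterministic, the map $\colim_i\Det(X_i,Z)\to\colim_i\CatC(X_i,Z)$ is injective for every $Z$ (two deterministic representatives identified in $\CatC$ after a deterministic transition are already identified over $\Det(\CatC)$), so the compatibility of the family $(f_j)_j$ in $\Pro(\CatC)$ transfers to a compatible family over $\Pro(\Det(\CatC))$; this exhibits $f$ as a morphism of $\Pro(\Det(\CatC))$. I expect this last step --- the level-by-level deterministic factorization via the filtered colimit, crucially using that the transition maps are themselves deterministic --- to be the main obstacle, whereas the monoidal structure and the terminality of $I$ are essentially bookkeeping on top of Fox's theorem and the standard monoidal pro-completion.
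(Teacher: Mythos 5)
Your proposal is correct and takes essentially the same route as the paper's proof: for a deterministic $f$, project the copyability equation to a finite stage $Y'$, use the filtered-colimit description of hom-sets to find a stage where it holds on the nose, and compose with the (deterministic) transition map to obtain a representative that is deterministic in $\CatC$. The additional details you supply --- Fox's theorem for the cartesian structure on $\Pro(\Det(\CatC))$, terminality of the unit, and the injectivity of $\colim_i\Det(\CatC)(X_i,Z)\to\colim_i\CatC(X_i,Z)$ used to assemble the level-wise deterministic representatives into a single arrow of $\Pro(\Det(\CatC))$ --- are precisely the steps the paper leaves implicit, and they are all sound.
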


\begin{proof}
The monoidal unit is the terminal object $1 \in \CatC$.  
Clearly every map in $\Pro(\Det(\CatC))$ is deterministic.  
Let $f : X \to Y$ be deterministic in $\ProDet(\CatC)$.  
Then for every $Y \to Y'$ in $\Pro(\Det(\CatC))$, there is an $X \to X'$ in $\Pro(\Det(\CatC))$ such that
\[
  X \xrightarrow{f} Y \to Y' \;=\; X \to X' \xrightarrow{f'} Y'
\]
for some $f' : X' \to Y'$ in $\CatC$, whence we have equations
\begin{align*}
  X &\xrightarrow{f} Y \xrightarrow{\mathrm{copy}} Y \otimes Y \to Y' \otimes Y' \\
    &= X \xrightarrow{f} Y \to Y' \xrightarrow{\mathrm{copy}} Y' \otimes Y' \\
    &= X \to X' \xrightarrow{f'} Y' \xrightarrow{\mathrm{copy}} Y' \otimes Y',
\end{align*}
and
\begin{align*}
  X &\xrightarrow{\mathrm{copy}} X \otimes X \xrightarrow{f \otimes f} Y \otimes Y \to Y' \otimes Y' \\
    &= X \xrightarrow{\mathrm{copy}} X \otimes X \to X' \otimes X' \xrightarrow{f' \otimes f'} Y' \otimes Y' \\
    &= X \to X' \xrightarrow{\mathrm{copy}} X' \otimes X' \xrightarrow{f' \otimes f'} Y' \otimes Y'.
\end{align*}
Thus for some factorization $X \to X'' \to X'$ in $\Pro(\Det(\CatC))$ we have
\[
  X'' \to X' \xrightarrow{f'} Y' \xrightarrow{\mathrm{copy}} Y' \otimes Y' 
  = X'' \to X' \xrightarrow{\mathrm{copy}} X' \otimes X' \xrightarrow{f' \otimes f'} Y' \otimes Y',
\]
whence we deduce that
\[
  X'' \to X' \xrightarrow{f'} Y'
\]
is in $\Det(\CatC)$.  
Since $Y \to Y'$ was arbitrary, the map $f$ is in $\Pro(\Det(\CatC))$.
\end{proof}

\begin{thm}
$\ProDet(\CatC)$ has all small Kolmogorov products.
\end{thm}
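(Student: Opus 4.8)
The plan is to take as the candidate Kolmogorov product the cartesian product in $\Pro(\Det(\CatC))$, namely $\bigotimes_{i\in I} X_i := \prod_{i\in I} X_i$, and to verify directly the defining conditions of a Kolmogorov product: that the finite marginal projections $\pi_F$ are deterministic and compatible with marginalization, and that for every object $W$ the maps $\pi_F \otimes \id_W$ exhibit $\left(\prod_{i\in I} X_i\right) \otimes W$ as a limit of the cofiltered diagram $F \mapsto X_F \otimes W$ of finite marginals, where $X_F = \bigotimes_{i\in F} X_i$ and $F$ ranges over the finite subsets of $I$ ordered by inclusion.

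First I would settle existence. Since $\Det(\CatC)$ is cartesian monoidal, $\Pro(\Det(\CatC))$ has finite products, so each $X_F$ exists there; and since $\Pro(\Det(\CatC))$ has all small cofiltered limits, the cofiltered limit $\prod_{i\in I} X_i := \lim_F X_F$ over finite $F$ exists and carries the universal property of the product. This object lies in $\ProDet(\CatC)$, and the projections $\pi_F$, being maps of $\Pro(\Det(\CatC))$, are deterministic by the preceding Lemma; compatibility with the marginalization maps is immediate from their definition as product projections.

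The heart of the argument is the limit condition, which I would reduce to two observations. (A) In any category with finite products the functor $-\times W$ preserves connected limits, and cofiltered limits are connected; applied in $\Pro(\Det(\CatC))$ this yields $\left(\prod_{i\in I} X_i\right)\times W \cong \lim_F (X_F \times W)$, a cofiltered limit in $\Pro(\Det(\CatC))$. (B) The functor $\Pro(\Det(\CatC)) \to \Pro(\CatC)$ is $\Pro$ of the inclusion $\Det(\CatC) \hookrightarrow \CatC$, and $\Pro$ of any functor preserves cofiltered limits; hence the cone of (A) is also a limit cone in $\Pro(\CatC)$. Since $\ProDet(\CatC)$ is a full subcategory of $\Pro(\CatC)$ containing both this limit object and the entire diagram, and since $\otimes$ in $\ProDet(\CatC)$ is by definition the product of $\Pro(\Det(\CatC))$, the cone $(\pi_F \otimes \id_W)_F$ is a limit cone in $\ProDet(\CatC)$ when tested against arbitrary morphisms, not merely the deterministic ones. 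This is precisely the required condition, uniformly in $W$ (with $W = 1$ included), and the proof concludes.

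The step I expect to be the crux is observation (B): it is what upgrades a limit computed in the deterministic, cartesian world $\Pro(\Det(\CatC))$ to a genuine limit in the Markov category $\ProDet(\CatC)$, i.e.\ one whose universal property is tested against all kernels. Everything else is bookkeeping about finite products and cofiltered limits; the only real content is that the limit cone detected in the cartesian world remains universal against all $\Pro(\CatC)$-morphisms, so that no stochastic cone into the finite-marginal diagram fails to factor uniquely.
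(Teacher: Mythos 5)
Your proof is correct and takes essentially the same route as the paper: the candidate Kolmogorov product is the product in $\Pro(\Det(\CatC))$, and the crux is that $\Pro(\Det(\CatC)) \to \Pro(\CatC)$ (hence into the full subcategory $\ProDet(\CatC)$) preserves cofiltered limits, so the cartesian limit cone remains universal against all kernels. Your observations (A) and (B) are exactly the details the paper compresses into the phrase ``essentially by construction.''
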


\begin{proof}
Given $(X_i : i \in I)$, let $X$ be their product in $\Pro(\Det(\CatC))$, as an object of $\ProDet(\CatC)$.  
Then $X$ is an infinite tensor product of the $X_i$, since $\Pro(\Det(\CatC)) \to \ProDet(\CatC)$ preserves cofiltered limits, essentially by construction.
\end{proof}

In the case of $\ProDet(\FinKer)$ there is also a faithful functor $\ProDet(\FinKer)\to \Kleisli(\Radon)$ into the Kleisli category of the Radon monad on compact Hausdorff spaces. On objects, this maps a cofiltered diagram to its limit in compact Hausdorff spaces: a Stone space.
Recall that a morphism $f\colon \lim_i X_i\to \lim_j Y_j$ in $\ProDet(\FinKer)$
is given by, for each $j$, a choice $i$ and a morphism in $\FinKer(X_i,Y_j)$ making suitable diagrams commute, and modulo choice of~$i$. 
Thus for each deterministic point $x:1\to \lim_i X_i$, i.e.~point of the Stone space, 
we have a morphism $p_j(x)\in \FinKer(1,Y_j)$ for all $j$, i.e.~a distribution $p_j(x)$ on $Y_j$ for all $j$, and these are all suitably compatible;
we then construct a function $p:\lim_iX_i\to R(\lim_j Y_j)$ between compact Hausdorff spaces, mapping $x$ to the Kolmogorov extension of $p_j(x)$ on the Stone space $\lim_{j}X_j$. 

From \cite[Example 11.35]{fritz-a-synthetic-approach-to-markov-kernels}, the Markov category of all probability kernels between measurable spaces is causal. And so, from this faithful embedding, we see that $\ProDet(\FinKer)$ is causal.

We revisit this in the Section~\ref{sec:dual}, where the algebra brings out a different angle, and also suggests a direct proof of causality. 

\subsection{Aside: Additional basic results}

\begin{lem}\label{lem:rep}
If $\CatC$ is representable, then so is $\ProDet(\CatC)$.
\end{lem}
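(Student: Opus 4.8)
The plan is to transport the representing adjunction of $\CatC$ through the $\Pro$-construction. Recall that $\CatC$ being representable means precisely that the inclusion $i\colon \Det(\CatC)\hookrightarrow\CatC$ admits a right adjoint $P\colon\CatC\to\Det(\CatC)$, the distribution-object functor, with counit the family of sampling kernels $\samp_X\colon PX\kerto X$, and that this adjunction is compatible with the monoidal structure, so that $P$ is an affine symmetric monoidal monad on $\Det(\CatC)$ of which $\CatC$ is the Kleisli category. Since $\Pro(-)$ is a $2$-functor on categories it preserves adjunctions: applying it to $i\dashv P$ yields an adjunction $\Pro(i)\dashv\Pro(P)$ between $\Pro(\Det(\CatC))$ and $\Pro(\CatC)$, whose unit and counit are $\Pro$ of the originals.

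First I would isolate the relevant corestriction. By definition $\ProDet(\CatC)$ is the full image of $\Pro(i)$, so $\Pro(i)$ factors as $\Pro(\Det(\CatC))\xrightarrow{\bar\imath}\ProDet(\CatC)\xrightarrow{j}\Pro(\CatC)$ with $j$ the full inclusion and $j\bar\imath=\Pro(i)$. Set $Q\colon\ProDet(\CatC)\to\Pro(\Det(\CatC))$ to be $\Pro(P)\circ j$; this is well defined since $\Pro(P)$ lands in $\Pro(\Det(\CatC))$ by construction. Then for $A\in\Pro(\Det(\CatC))$ and $Z\in\ProDet(\CatC)$, fullness of $j$ gives a natural chain of isomorphisms
\[
\ProDet(\CatC)(\bar\imath A,Z)=\Pro(\CatC)(\Pro(i)A,jZ)\cong\Pro(\Det(\CatC))(A,\Pro(P)jZ)=\Pro(\Det(\CatC))(A,QZ),
\]
so $\bar\imath\dashv Q$. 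As $\bar\imath$ is exactly the inclusion of the deterministic subcategory $\Pro(\Det(\CatC))$ into $\ProDet(\CatC)$ identified in the previous lemma, this exhibits a right adjoint to that inclusion, and the restricted counit supplies the sampling kernels $\samp_Z\colon QZ\kerto Z$.

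Finally I would verify that the adjunction carries the full Markov-representability data rather than merely a bare categorical right adjoint, and this is where I expect the real work to lie. One must check that the affine symmetric monoidal structure on $P$ survives $\Pro$: that $\Pro(\Det(\CatC))$ inherits its cartesian structure from $\Det(\CatC)$ (as already used in the excerpt, where $\Pro(\Det(\CatC))$ is cartesian monoidal), that the lax comparison maps and unit of $P$ go over to those of $\Pro(P)$, and that the resulting tensor on the Kleisli side agrees with the tensor $X\otimes Y=\lim X'\otimes Y'$ described before the previous lemma. Granting that $\Pro$ is a monoidal $2$-functor preserving the cofiltered limits defining these products, the monoidality of $\samp$ and its compatibility with $\copymor$ and with discarding transport from $\CatC$ to $\ProDet(\CatC)$ termwise through the projections $Z\to Z'$. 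The main obstacle is thus the bookkeeping confirming that $\Pro(P)$ is again an affine symmetric monoidal monad with Kleisli category $\ProDet(\CatC)$ — that representability \emph{as a Markov category} is inherited — rather than the formal production of the adjoint, which is automatic.
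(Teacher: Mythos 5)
Your proposal is correct and follows essentially the same route as the paper: the paper's proof is exactly the observation that $2$-functoriality of $\Pro$ transports the adjunction $\Det(\CatC) \hookrightarrow \CatC \dashv D$ to an adjunction $\Pro(\Det(\CatC)) \to \Pro(\CatC) \dashv D^+$, with $D^+ X = \lim_{X \to X'} D X'$. Your additional step of corestricting the adjunction along the full inclusion $\ProDet(\CatC) \hookrightarrow \Pro(\CatC)$ is a worthwhile piece of care that the paper leaves implicit, and your closing remarks on monoidal compatibility go beyond what the paper's (right-adjoint-only) notion of representability demands.
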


\begin{proof}
Suppose $\Det(\CatC) \to \CatC$ has a right adjoint $D : \CatC \to \Det(\CatC)$.  
By $2$-functoriality of $\Pro$, this gives a right adjoint $D^+$ to $\Pro(\Det(\CatC)) \to \Pro(\CatC)$:
\[
  D^+ X = \lim_{X \to X' \in \Pro(\Det(\CatC))} D X'.
\]
\end{proof}

\begin{lem}\label{lem:asrep}
If $\CatC$ is a.s.-compatibly representable, then so is $\ProDet(\CatC)$.
\end{lem}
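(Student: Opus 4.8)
The plan is to build directly on Lemma~\ref{lem:rep}. Since $\CatC$ is in particular representable, that lemma already equips $\ProDet(\CatC)$ with a representation: the right adjoint $D^+ X = \lim_{X \to X'} D X'$ to the inclusion $\Pro(\Det(\CatC)) \to \ProDet(\CatC)$, with sampling maps $\samp$ assembled from those of $\CatC$. It therefore remains only to verify the a.s.-compatibility condition for this representation, namely that the deterministic transpose reflects almost-sure equality: for every $p : A \to X$ and every pair $f, g : X \to Y$ in $\ProDet(\CatC)$, writing $f^\sharp, g^\sharp : X \to D^+ Y$ for the transposes (so $\samp_Y \circ f^\sharp = f$), if $f$ and $g$ are $p$-almost-surely equal then so are $f^\sharp$ and $g^\sharp$. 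The reverse implication is automatic, since $f = \samp_Y \circ f^\sharp$ and $=_{p\text{-a.s.}}$ is stable under post-composition; thus the content is the forward direction $f =_{p\text{-a.s.}} g \Rightarrow f^\sharp =_{p\text{-a.s.}} g^\sharp$.

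The main tool is that everything in sight is detected at finite stages. A hom-set of $\Pro(\CatC)$ is computed as $\Pro(\CatC)(A, Z) = \lim_{Z \to Z'}\colim_{A \to A'}\CatC(A', Z')$, so an equation of morphisms whose codomain is a cofiltered limit $Z = \lim_{Z \to Z'} Z'$ holds if and only if the two composites agree after projection to each $Z'$, and each such agreement already holds at some finite stage $A \to A'$ of the domain. I would first unfold $f =_{p\text{-a.s.}} g$ as the equation $(\id_X \otimes f)\circ\copymor_X\circ p = (\id_X \otimes g)\circ\copymor_X\circ p$ of morphisms $A \to X \otimes Y$, and then project it along each $X \to X'$, $Y \to Y'$. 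By the construction of $\otimes$ and $\copymor$ in $\ProDet(\CatC)$, and after passing to a common finite domain stage $A \to A'$ and a common finite stage $X'$ carrying the $\copymor$-leg and finite representatives $\tilde f, \tilde g : X' \to Y'$ of the composites $X \xrightarrow{f} Y \to Y'$, this projection becomes exactly the statement that $\tilde f =_{p'\text{-a.s.}} \tilde g$ in $\CatC$, where $p' : A' \to X'$ represents $p$.

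At this finite stage the a.s.-compatibility of $\CatC$ applies and yields $\tilde f^\sharp =_{p'\text{-a.s.}} \tilde g^\sharp$, an equation of morphisms $A' \to X' \otimes D Y'$. Because $D^+$ was built componentwise from the $\CatC$-transposes in Lemma~\ref{lem:rep}, the $Y'$-component of $f^\sharp$ is precisely the $\CatC$-transpose of $X \xrightarrow{f} Y \to Y'$, so the $(X', Y')$-projection of the target equation $(\id_X \otimes f^\sharp)\circ\copymor_X\circ p = (\id_X \otimes g^\sharp)\circ\copymor_X\circ p$ is exactly the image of the finite-stage equation just obtained. Running this for every $X \to X'$ and $Y \to Y'$, the per-component equalities reassemble, via the $\lim$–$\colim$ description of the hom-sets, into the single required equation in $\ProDet(\CatC)$, giving $f^\sharp =_{p\text{-a.s.}} g^\sharp$.

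I expect the only genuine obstacle to be the bookkeeping of indices. Almost-sure equality is a conditional relation rather than a bare equation, so the witnessing finite stages for the copy structure on $X$, for the domain $A$, and for the codomain $Y$ must be chosen simultaneously, and one must check that such compatible choices can be made coherently across all components $(X', Y')$. This is exactly where cofilteredness of the $\Pro$-diagrams is used: to pass to common refinements and to guarantee that the componentwise equalities glue to an actual equality of $\Pro$-morphisms rather than merely holding stagewise.
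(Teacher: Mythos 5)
Your proof is correct and takes essentially the same route as the paper: the paper also reduces the required cancellation of $\id \otimes \samp$ (i.e.\ the implication $f =_{p\text{-a.s.}} g \Rightarrow f^\sharp =_{p\text{-a.s.}} g^\sharp$) to checking after post-composition with the finite-stage projections $A \otimes X \to A' \otimes X'$, leaving the finite-stage invocation of a.s.-compatibility of $\CatC$ and the cofiltered-limit bookkeeping as ``straightforward''. Your write-up simply makes explicit the details the paper omits, in particular that the components of $f^\sharp$ are the $\CatC$-transposes of the components of $f$, which is exactly where the hypothesis on $\CatC$ is used.
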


\begin{proof}
Let $p : T \to A$ and $f,g : A \to X$ be maps in $\ProDet(\CatC)$, and suppose that $f =_{\!p\text{-a.s.}} g$, i.e.
\[
  T \xrightarrow{p} A \to A \otimes A \xrightarrow{\id \otimes f} A \otimes X
  = T \xrightarrow{p} A \to A \otimes A \xrightarrow{\id \otimes g} A \otimes X,
\]
or equivalently,
\begin{align*}
  &T \xrightarrow{p} A \to A \otimes A \xrightarrow{\id \otimes f^\sharp} A \otimes D^+X \xrightarrow{\id \otimes \samp} A \otimes X
\\  =\quad& T \xrightarrow{p} A \to A \otimes A \xrightarrow{\id \otimes g^\sharp} A \otimes D^+X \xrightarrow{\id \otimes \samp} A \otimes X.
\end{align*}
We are required to show that we can cancel the factor of $\id \otimes \samp$ from each side.  
It is straightforward to show that this is possible upon post-composing with projections $A \otimes X \to A' \otimes X'$.
\end{proof}
(Note: Lemmas~\ref{lem:rep} and~\ref{lem:asrep} are not relevant to $\FinKer$.)

\begin{lem}
Let $\CatC = \FinKer$.  
Then for $X,Y \in \FinKer$, $L \in \ProDet(\FinKer)$, every kernel $p : X \kerto Y \otimes L$ admits a conditional $k : X \otimes Y \kerto L$.
\end{lem}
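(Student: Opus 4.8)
The plan is to reduce the statement to the corresponding fact about finite kernels, exploiting that $Y$ is finite and that $\FinKer$ has conditionals. Write $L=\lim_j L_j$ with each $L_j\in\fin$. Since $X$ and $Y$ are finite, so is $X\otimes Y$, and by the description of morphisms recalled above, a map out of a finite object into $\lim_j L_j$ in $\ProDet(\FinKer)$ is exactly a compatible family of finite kernels into the $L_j$; likewise $p\colon X\kerto Y\otimes L$ is given by a compatible family $p_j\colon X\kerto Y\otimes L_j$, all sharing the common $Y$-marginal $q\colon X\kerto Y$ obtained by discarding $L$. A morphism $k\colon X\otimes Y\kerto L$ is a conditional of $p$ iff the defining conditional equation (recovering $p$ from $q$ and $k$) holds; since both sides are morphisms into $\lim_j(Y\otimes L_j)$, this holds iff it holds after projecting to each $Y\otimes L_j$, i.e.\ iff each component $k_j\colon X\otimes Y\kerto L_j$ is a conditional of $p_j$ in $\FinKer$. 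Thus it suffices to produce a compatible family $(k_j)_j$ with $k_j$ a conditional of $p_j$.

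First I would invoke that $\FinKer$ has conditionals, which here takes an elementary form because $Y$ is finite: for each $(x,y)$ with $q(x)(y)>0$, the value $k_j(x,y)$ is forced to be the restriction of $p_j(x)$ to the fibre $\{y\}\times L_j$, normalised by $q(x)(y)$. This canonical part of the construction is automatically compatible across $j$, since pushing $p_{j'}$ forward along the bonding map $L_{j'}\to L_j$ yields $p_j$, and normalisation by the level-independent weight $q(x)(y)$ commutes with this pushforward. For each $(x,y)$ with $q(x)(y)=0$ the conditional is unconstrained, so I would fix once and for all a single distribution $\nu\in\Radon(L)$ — equivalently a compatible family $\nu_j$ on the $L_j$, e.g.\ a Dirac at a chosen point of $L$ (one exists whenever such a null atom actually arises, since then $L$ is non-empty) — and set $k_j(x,y)=\nu_j$ on every such atom.

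The only real obstacle is the compatibility of $(k_j)_j$ across the cofiltered diagram, and it is precisely the non-uniqueness of conditionals that threatens it: a conditional is determined only $q$-almost surely, so independent level-wise choices could fail to cohere under the bonding maps. This is resolved by the two observations above — on the support of $q$ the conditional is canonical and hence compatible, and off the support a single global choice $\nu$ guarantees coherence by construction. Having verified that $(k_j)_j$ is compatible, it assembles into a map $k\colon X\otimes Y\kerto L$ in $\ProDet(\FinKer)$, and the level-wise conditional equations yield the conditional equation for $k$ and $p$. I would stress that finiteness of $Y$ is exactly what makes this work: it reduces disintegration to splitting a measure over finitely many atoms, so no measurable-selection argument is needed and the off-support freedom can be absorbed into one global measure, whereas for infinite $Y$ the same strategy would meet genuine measure-theoretic difficulties.
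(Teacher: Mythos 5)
Your proposal is correct and follows essentially the same route as the paper's proof: fix the $Y$-marginal weight $q(x)(y)$ (which is level-independent), define the conditional on the support by normalizing the level-wise measures $p_j(x)(y,-)$ — noting this normalization preserves the cone/compatibility condition — and make a single global (hence automatically compatible) arbitrary choice on null atoms, then verify the conditional equation level-wise. The only cosmetic differences are that the paper first reduces to $X=1$ and compresses your off-support compatibility discussion into the phrase ``define $k(\ast,y)$ arbitrarily'' (meaning: as one morphism $1 \kerto L$), which amounts to the same global choice you make explicit.
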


\begin{proof}
Without loss of generality, let $X = 1$.  
For $y \in Y$, the value of $p(\ast)(y,\top^Z)$ is independent of the deterministic projection $\pi : L \to Z \in \Pro(\fin)$ chosen.  
If it is zero, we define $k(\ast,y)$ arbitrarily.  
If it is not zero, say $0 < K \le 1$, then for each $\pi : L \to Z$ we have a $K$-valued measure $p(\ast)(y,-)$ on $Z$ forming a cone of kernels over the diagram for $L$.  
Normalizing by $K$, this is still a cone and hence we have defined a kernel $k : 1 \otimes Y \kerto L$.

We must check that this really is the conditional, but this is straightforward.
\end{proof}

\section{Dual perspective in terms of Boolean algebras and effect algebras}
\label{sec:dual}
\paragraph{Boolean algebras and effect algebras}

Let \balg\ denote the category of Boolean algebras, whose subcategory \finba\ of finite algebras is equivalent to $\fin\op$, the dual of the category of finite sets and all functions.
We recall some generalities.
The embedding
\begin{displaymath}
  \balg \hookrightarrow [\finba\op,\set] \simeq [\fin,\set]
\end{displaymath}
is full and faithful, and the essential image is the closure of the representable functors under filtered colimits.\

The category \ealg\ of \emph{effect algebras} is traditionally presented in terms of partial algebras, but it is sometimes helpful to use the following viewpoint: the mapping
\begin{displaymath}
  (E,\ovee,0,1) \mapsto \lambda n. \{ (e_1,\ldots,e_n) \in E^n : e_1 \ovee \ldots \ovee e_n = 1 \}
\end{displaymath}
embeds \ealg\ as a full reflective subcategory of $[\fin,\set]$ containing \balg
\begin{displaymath}
  \balg \hookrightarrow \ealg \hookrightarrow [\fin,\set].
\end{displaymath}
We won't refer to the precise limit-preservation conditions that characterize \ealg\ this way \cite{jacobs-convexity-duality-effects,staton-uijlen-effect-algebras-presheaves-nonlocality-contextuality}.
The cocartesian monoidal structure on finite Boolean algebras extends by Day convolution to a symmetric monoidal structure $\boxplus$ on $[\fin,\set]$, which restricts to the coproduct on \balg.
It's not obvious, but the reflection of the Day convolution into \ealg\ gives the classical effect algebra tensor $\otimes$ when restricted to \ealg.
While the inclusion $\balg \hookrightarrow \ealg$ does not preserve coproducts in general, it does preserve the initial Boolean algebra $\two = \{\bot,\top\}$.
This makes \ealg\ an interesting semicocartesian category.

\paragraph{The interval effect monoid}

Let \interval\ denote the effect algebra whose underlying set is the real interval $[0,1]$ with partial sum $a \oplus b$ defined whenever $a + b \leq 1$ in which case it is equal to $a + b$.
As a functor $\fin \to \set$,
\begin{displaymath}
  \interval(n) = \{ \phi \in [0,1]^n : \sum_{i = 1}^n \phi(i) = 1 \}
\end{displaymath}
and the action of a function $f : m \to n$ sends $\phi \in \interval(m)$ to
\begin{displaymath}
  \lambda i \in n. \sum_{f(j) = i} \phi(j).
\end{displaymath}

As is well-known, \interval\ is a monoid with respect to $\otimes$, where the multiplication can be represented in terms of the Day convolution
\begin{displaymath}
  \int^{a,b \in \fin} \fin(a \times b,n) \times \interval(a) \times \interval(b) \cong (\interval\boxplus\interval)(n) \to \interval(n)
\end{displaymath}
by the dinatural transformation
\begin{align*}
  \fin(a \times b,n) \times \interval(a) \times \interval(b) & \to \interval(n) \\
  (f,\phi,\psi) & \mapsto \interval(f)(\lambda (i,j) \in a \times b.  \phi(i)\psi(j))
\end{align*}
and the unit is just the unique map $\two \to \interval$.
(Recall that $\interval \boxplus \interval$ is not itself the effect tensor $\interval \otimes \interval$, rather it's an object of $[\fin,\set]$ whose reflection into \ealg\ is $\interval \otimes \interval$).

\paragraph{Measures on and kernels between Boolean algebras}

We will only work with probability measures.
A measure (i.e.\ valuation) on a Boolean algebra $A$ is a map $m : A \to \interval$ in \ealg, i.e. a natural transformation.
This is actually just the usual notion of finitely-additive measure (or modular valuation).

\begin{defn}
  For $A,B \in \balg$, a \emph{kernel} $k : A \kerto B$ is a map $k : B \to \interval \otimes A$.
\end{defn}

Modules for the effect monoid \interval\ have been called ``convex effect algebras'' and have a representation theorem in terms of ordered vectors spaces with a chosen strong unit \cite{gudder-pulmannova-representation-theorem-for-convex-effect-algebras}.
For our purposes, the following special case is helpful.

\begin{lem}
  Let $A \in \balg$.
  Then $\interval \otimes A$ is isomorphic to the effect algebra $\riesz(A)$ whose elements are locally constant functions $f : S(A) \to [0,1]$ from the Stone space $S(A)$ of $A$ to the unit interval with the evident partial sum operation.
\end{lem}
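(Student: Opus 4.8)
The plan is to identify $\riesz(A)$ directly by its universal property, using that the effect-algebra tensor $\interval \otimes A$ classifies bimorphisms: an effect-algebra map $\interval \otimes A \to G$ is the same datum as a \emph{bimorphism} $\interval \times A \to G$, i.e.\ a map additive in each variable separately for the partial sums and sending $(1,1)$ to $1$. Writing $\hat a \subseteq S(A)$ for the clopen set corresponding to $a \in A$ under Stone duality, I would first exhibit the candidate universal bimorphism
\[
  \beta : \interval \times A \to \riesz(A), \qquad \beta(c,a) = c\cdot\mathbbm{1}_{\hat a}.
\]
Checking that $\beta$ is a bimorphism is routine: additivity in $c$ is clear, additivity in $a$ uses that disjoint elements of $A$ have disjoint clopen sets so the indicators add without overlap, and $\beta(1,1) = \mathbbm{1}_{S(A)} = 1$. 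It then suffices to show $\beta$ is universal, for then $\riesz(A) \cong \interval \otimes A$ by uniqueness of targets of universal bimorphisms, with the isomorphism sending $c \otimes a$ to $c\,\mathbbm{1}_{\hat a}$.

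For universality, given any bimorphism $\gamma : \interval \times A \to G$ I would define $\bar\gamma : \riesz(A) \to G$ as follows. Every $f \in \riesz(A)$ is locally constant, hence takes finitely many values and can be written $f = \bigoplus_k c_k\,\mathbbm{1}_{\hat{a_k}}$ for a finite family of pairwise disjoint $a_k \in A$; set $\bar\gamma(f) = \bigoplus_k \gamma(c_k,a_k)$. The main obstacle is that this is genuinely well-defined, and here effect algebras are delicate because they lack Riesz decomposition. Two points need care. First, \emph{summability}: the family $(\gamma(c_k,a_k))_k$ must be orthogonal in $G$. Since $\gamma(c_k, a_k) \le \gamma(1,a_k)$, as $\gamma(1,a_k) = \gamma(c_k,a_k) \oplus \gamma(1-c_k, a_k)$ by additivity in the first variable, and the $\gamma(1,a_k)$ are orthogonal because the $a_k$ are disjoint, orthogonality of the dominated family follows from the general effect-algebra facts that the orthosupplement $(-)^\perp$ is order-reversing and that partial sums are monotone: inductively $\bigoplus_{k<n}\gamma(c_k,a_k) \le \bigoplus_{k<n}\gamma(1,a_k) \le \gamma(1,a_n)^\perp \le \gamma(c_n,a_n)^\perp$, so the next term may be added. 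Second, \emph{independence of the representation}: given two such expressions for $f$, I would pass to the common clopen refinement $\hat{a_k}\cap\hat{b_l}$ and use biadditivity of $\gamma$ in the second variable together with the fact that $f$ takes a single value on each overlap to match the two sums term by term.

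With $\bar\gamma$ well-defined, checking that it is an effect-algebra morphism (again by refining to a common partition when $f \oplus g$ is defined) and that it is the unique map with $\bar\gamma\circ\beta = \gamma$ (since the elements $c\,\mathbbm{1}_{\hat a}$ generate $\riesz(A)$ under $\oplus$) is routine. This establishes the universal property and hence the isomorphism. I expect the summability argument to be the one genuinely non-formal step, precisely because it is where the absence of Riesz interpolation in general effect algebras has to be circumvented; the remainder is partition bookkeeping. An alternative, should the direct argument prove cumbersome, is to reduce to finite $A$ first: both $\interval\otimes(-)$, being a left adjoint and hence cocontinuous, and $\riesz(-)$, since $S$ turns the filtered colimit $A = \colim_i A_i$ of the finite subalgebras into a cofiltered limit of finite discrete spaces on which the locally constant functions form the corresponding filtered colimit, preserve filtered colimits, so naturality of $\beta$ reduces the claim to $\interval\otimes\mathcal{P}(m)\cong\interval^m$ for the finite power-set algebra $\mathcal{P}(m)$, where $S(\mathcal{P}(m))$ is discrete and the partitions above are generated by the atoms.
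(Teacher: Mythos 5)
The paper offers no proof of this lemma at all: it is stated as a ``special case'' immediately after the citation of the Gudder--Pulmannov\'a representation theorem for convex effect algebras (\interval-modules), and the reader is left to extract it from there. Your proposal therefore supplies something the paper only gestures at, and your main argument is correct and essentially self-contained: exhibit $\beta(c,a)=c\,\mathbbm{1}_{\hat a}$ as a bimorphism and verify that $\riesz(A)$ has the universal property classifying bimorphisms out of $\interval\times A$. One small remark on how this connects to the paper's conventions: the paper \emph{defines} $\otimes$ on \ealg\ as the reflection of the Day convolution, and only remarks that this agrees with the classical (bimorphism-classifying) tensor; your proof addresses the classical universal property, which is the right target given that remark. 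Your two ``points of care'' are indeed the right ones, and both go through. For summability, note that the crucial fact is that the pairwise disjoint $a_k$ are \emph{jointly} summable in the Boolean algebra $A$ (their sum being the join), so additivity of $\gamma$ in the second variable makes the family $(\gamma(1,a_k))_k$ jointly summable --- pairwise orthogonality alone would not suffice in a general effect algebra, but your displayed inductive chain $\bigoplus_{k<n}\gamma(c_k,a_k)\le\bigoplus_{k<n}\gamma(1,a_k)\le\gamma(1,a_n)^\perp\le\gamma(c_n,a_n)^\perp$ uses exactly this joint summability, so the step is sound. For well-definedness, refining to $a_k\wedge b_l$ and discarding empty overlaps via $\gamma(c,0)=0$ (cancellation) is correct. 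Compared with the paper's implicit route, your proof is more elementary and makes the isomorphism explicit on generators, at the cost of the effect-algebra bookkeeping; the citation route buys brevity but imports a substantially more general representation theorem.

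There is one genuine error, though it sits only in your fallback reduction and does not affect the main argument: $\interval\otimes(-)$ is \emph{not} a left adjoint on \ealg. Since $\two$ is simultaneously the tensor unit and the initial object (this is exactly the semicocartesian structure the paper emphasizes), monoidal closedness would force
\begin{displaymath}
  \ealg(E,F)\;\cong\;\ealg(\two\otimes E,F)\;\cong\;\ealg(\two,[E,F])\;\cong\;1
\end{displaymath}
for all $E,F$, which is absurd. The correct justification for preservation of filtered colimits is the paper's own construction of $\otimes$: the Day tensor $\boxplus$ on $[\fin,\set]$ is cocontinuous in each variable, the reflector into \ealg\ preserves colimits, and \ealg\ is closed under filtered colimits in $[\fin,\set]$ (its defining conditions involve only finite limits, which commute with filtered colimits). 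With that repair, the reduction to finite $A$ and the computation $\interval\otimes\mathcal{P}(m)\cong\interval^m$ do give a valid alternative; but as written the adjointness claim is false, and fortunately your primary universal-property argument never needs it.
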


Stone duality tells us that $\balg\op$ is equivalent to a full subcategory of \chaus, i.e.\
\begin{displaymath}
  \balg(B,A) \cong \chaus(S(A),S(B)).
\end{displaymath}
It is fairly straightforward to check that measures on $A \in \balg$ are equivalent to Radon (probability) measures on $S(A)$.
Thus we `extend' Stone duality a little bit with the observation
\begin{displaymath}
  \ealg(B,\interval) \cong \chaus(1,R(S(B)))
\end{displaymath}
where $R$ is the Radon monad.
In general, we only get an injection
\begin{displaymath}
  \ealg(B,\interval \otimes A) \hookrightarrow \chaus(S(A),R(S(B))).
\end{displaymath}
The Radon kernels $k : S(A) \to R(S(B))$ that arise in this way are those with the property that, for every clopen set $C \subseteq S(B)$, the function $\lambda x \in S(A). k(x,C)$ is locally constant --- in that case there is a (necessarily finite) partition of $S(A)$ into clopen sets such that the measure assigned to $C$ is constant on each.

\begin{exa}
  A Radon kernel between Stone spaces that does not arise in this way is $b : 2^{\mathbb N} \kerto 2$ where $b(\vec x)$ is the Bernoulli distribution with bias $\sum_{i \in \mathbb N} 2^{-i-1}x_i$.
\end{exa}

\paragraph{A Markov category of Boolean kernels}

\begin{defn}
  Let $\bker = ((\balg)_{\interval \otimes (-)})\op$, the dual of the Kleisli category of the monad $\interval \otimes (-)$ on \ealg\ restricted to the objects of \balg.
  Thus the objects of \bker\ are Boolean algebras and the morphism $A \to B$ are kernels, i.e. maps $B \to \interval \otimes A$ in \ealg.
\end{defn}

\begin{lem}
  \bker\ is a semicartesian monoidal category.
\end{lem}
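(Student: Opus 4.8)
The plan is to use the standard characterisation of \emph{semicartesian} as ``symmetric monoidal with terminal unit'', and to obtain both halves by transporting structure from $(\ealg,\otimes,\two)$ through the Kleisli construction of $T=\interval\otimes(-)$ and then through the opposite. So I must supply (i) a symmetric monoidal structure on $\bker$ and (ii) a proof that its unit $\two$ is terminal, and the dualities should make (ii) essentially free once (i) is set up.

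First I would pin down the monoidal structure. The interval $\interval$ is a \emph{commutative} monoid for the effect tensor: its multiplication $\interval\otimes\interval\to\interval$ is induced by multiplication of reals (hence commutative) and $\otimes$ is symmetric. Consequently $T=\interval\otimes(-)$ is a commutative monad on the symmetric monoidal category $(\ealg,\otimes,\two)$, and by the standard theorem of Kock the Kleisli category $\ealg_T$ is symmetric monoidal with tensor $\otimes$ on objects, unit $\two$, and the tensor of two kernels $k\colon B\to\interval\otimes A$ and $l\colon D\to\interval\otimes C$ obtained by pairing them to $B\otimes D\to(\interval\otimes A)\otimes(\interval\otimes C)$, reassociating via the symmetry to $(\interval\otimes\interval)\otimes(A\otimes C)$, and multiplying the two $\interval$-factors by $m\colon\interval\otimes\interval\to\interval$. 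This yields the sought kernel $A\otimes C\kerto B\otimes D$, and commutativity of $\interval$ together with symmetry of $\otimes$ is exactly what makes this bifunctor coherently symmetric.

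Next I would verify that restricting to $\balg$-objects gives a (full) monoidal subcategory, so that passing to the opposite yields the monoidal structure on $\bker$. I need $\two\in\balg$ (true: it is the initial Boolean algebra) and closure $A\otimes B\in\balg$ for $A,B\in\balg$. For the latter I use the facts already recorded: $\boxplus$ restricts to the coproduct on $\balg$, and $\otimes$ is the reflection of $\boxplus$ into $\ealg$; since $A\boxplus B$ is already a Boolean algebra, hence already an effect algebra, its reflection is itself, so $A\otimes B\cong A\boxplus B\in\balg$. Thus $(\balg)_T$ is a symmetric monoidal subcategory of $\ealg_T$, and $\bker=((\balg)_T)\op$ carries the opposite symmetric monoidal structure, still with unit $\two$. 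Terminality of the unit is then the dual of the stated fact that $\ealg$ is semicocartesian, i.e. that $\two$ is initial in $\ealg$: for every $A$ we have $\bker(A,\two)=\ealg(\two,\interval\otimes A)$, a singleton because $\two$ is initial, so $\two$ is terminal in $\bker$.

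The main obstacle, I expect, is the coherence bookkeeping behind the commutative-monad Kleisli structure together with the closure $A\otimes B\in\balg$: once the monad $T$ is confirmed commutative and $\balg$ is seen to be closed under $\otimes$, the resulting symmetric monoidal structure on $\bker$ is forced, and terminality of the unit drops out immediately as the opposite of $\ealg$ being semicocartesian. I would therefore spend the bulk of the argument checking that $\interval$ is genuinely a commutative monoid object for $\otimes$ and that the reflection of $A\boxplus B$ stays inside $\balg$, treating the remaining coherence verifications as routine instances of Kock's result.
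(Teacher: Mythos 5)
Your proposal is correct and takes essentially the same route as the paper: the paper's (very terse) proof likewise transports the effect-algebra tensor to the Kleisli category of $\interval \otimes (-)$ --- phrased there as the induced tensor of free $\interval$-modules, which is exactly what Kock's commutative-monad theorem produces --- and then deduces semicartesianness from the fact that the unit $\two$ is initial in \ealg, hence terminal in \bker. Your additional verifications (commutativity of $\interval$ as a monoid, closure of \balg\ under $\otimes$ via $A \otimes B \cong A +_{\balg} B$) are precisely the details the paper compresses into the word ``straightforwardly''.
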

\begin{proof}
  The tensor of effect algebras straightforwardly induces a tensor of free \interval-modules.
  \bker\ is semicartesian because the monoidal unit is $\two$, the initial object of \ealg.
\end{proof}

The copy-discard structure of \bker\ is given in the evident way.
For a Boolean algebra $A$, the `codiscard' map is the unique map
\begin{displaymath}
  \two \to \interval \otimes A
\end{displaymath}
and the `cocopy' map is
\begin{displaymath}
  A \otimes A \to \interval \otimes A
\end{displaymath}
given the composing the codiagonal in \balg\ with the unit of \interval\ in \ealg.
\begin{displaymath}
  A \otimes A \cong A +_\balg A \to A \cong \two \otimes A \to \interval \otimes A
\end{displaymath}

\begin{lem}
  The deterministic subcategory of \bker\ is precisely $\balg\op \hookrightarrow \bker$.
\end{lem}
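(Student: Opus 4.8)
The plan is to unwind the abstract definition of deterministic morphism into a concrete pointwise condition on kernels, using the representation $\interval \otimes A \cong \riesz(A)$, and then show that this condition forces a kernel to be $\{\bot,\top\}$-valued, i.e.\ to come from a Boolean homomorphism.

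Recall that a morphism $k \colon A \kerto B$, i.e.\ an $\ealg$-map $k \colon B \to \interval \otimes A \cong \riesz(A)$, is deterministic precisely when
\[
  \copymor_B \circ k = (k \otimes k) \circ \copymor_A \colon A \kerto B \otimes B .
\]
First I would compute both sides as $\ealg$-maps $B \otimes B \cong B +_\balg B \to \riesz(A)$ and compare them on a generator $b \otimes b'$ of the Boolean coproduct (i.e.\ on the clopen rectangle $\hat b \times \hat{b'} \subseteq S(B) \times S(B)$, these rectangles generating the clopen algebra). Using the explicit cocopy map $B +_\balg B \to \interval \otimes B$ together with the effect-monoid multiplication $\interval \otimes \interval \to \interval$, the left-hand side sends $b \otimes b'$ to $k(b \wedge b')$, while the right-hand side sends it to the pointwise product $k(b)\cdot k(b')$ of the two locally constant functions $k(b),k(b') \colon S(A) \to [0,1]$. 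Equivalently, in the Radon picture of each stalk this compares the diagonal pushforward $(\Delta_B)_* k_x$ with the product $k_x \otimes k_x$. Hence determinism is equivalent to the multiplicativity identity
\[
  k(b \wedge b') = k(b)\cdot k(b') \quad \text{in } \riesz(A), \text{ for all } b,b' \in B .
\]

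Next I would extract the consequences. Taking $b' = b$ yields $k(b) = k(b)^2$ pointwise, so every value $k(b)(x)$ lies in $\{0,1\}$; thus $k$ factors through the sub-effect-algebra $\two \otimes A \cong A \hookrightarrow \interval \otimes A$ of clopen $\{0,1\}$-valued functions, i.e.\ it is a map $k \colon B \to A$ of the underlying sets preserving the relevant structure. It then remains to verify that this map is a homomorphism of Boolean algebras. Being an $\ealg$-map, $k$ preserves $0$, $1$ and orthogonal sums, and multiplicativity makes it preserve all meets; evaluating at a point $x \in S(A)$, the induced $k_x \colon B \to \two$ preserves $\wedge$, $\top$, $\bot$, and from $b \oplus \neg b = 1$ also complements, so each $k_x$ is a Boolean homomorphism, equivalently a point of $S(B)$. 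Therefore $k$ is itself a Boolean homomorphism $B \to A$, that is, a morphism $A \to B$ of $\balg\op$; via Stone duality it is the continuous map $S(A) \to S(B)$, $x \mapsto k_x$, continuity being exactly the local constancy already built into $\riesz(A)$.

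Finally I would dispatch the converse, which is routine: any $h \in \balg(B,A)$, included into $\bker$ as $B \xrightarrow{h} A \cong \two \otimes A \hookrightarrow \interval \otimes A$, is $\{0,1\}$-valued and satisfies $h(b \wedge b') = h(b) \wedge h(b') = h(b)\cdot h(b')$, hence is deterministic. I expect the only delicate point to be the bookkeeping in the first step, namely correctly identifying the two composites of copy maps in the opposite Kleisli category and reducing them to the clean multiplicativity identity; once that identity is available, the passage from $\{0,1\}$-valued to Boolean homomorphism (in particular checking preservation of complements) is straightforward.
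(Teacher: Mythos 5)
Your proposal is correct and follows essentially the same route as the paper's proof: both unwind the copyability condition into a pointwise identity that forces $k(b) = k(b)^2$ in $\riesz(A)$, hence $\{0,1\}$-valued kernels, which are exactly the Boolean homomorphisms $B \to A$. Your write-up is somewhat more thorough than the paper's, in that you also record the full multiplicativity identity $k(b \wedge b') = k(b)\cdot k(b')$, explicitly check that a $\{0,1\}$-valued $\ealg$-map is a Boolean homomorphism, and verify the (routine) converse inclusion of $\balg\op$ into the deterministic maps, all of which the paper leaves implicit.
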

\begin{proof}
  Let $f : B \to \interval \otimes A$ represent a deterministic map $A \to B$ of \bker.
  We want to show that, for all $b \in B$, $f(b)$ has the form $1 \otimes a$ for some $a \in A$.
  Write $f(b)$ in the form $\alpha_1 \otimes a_1 + \ldots + \alpha_k \otimes a_k$ with the $a_i$ mutually orthogonal and $0 \leq \alpha_i \leq 1$.
  Copyability implies
  \begin{displaymath}
    \alpha_1 \otimes a_1 + \ldots + \alpha_k \otimes a_k = \alpha_1^2 \otimes a_1 + \ldots + \alpha_k^2 \otimes a_k
  \end{displaymath}
  whence $\alpha_i \in \{0,1\}$, as required.
\end{proof}

\paragraph{Causality}

From \cite[Example 11.35]{fritz-a-synthetic-approach-to-markov-kernels}, the Markov category of all probability kernels between measurable spaces is causal.
Since \bker\ embeds faithfully into it with a morphism of Markov categories, \bker\ is also causal.
Let's also argue directly.

Suppose we are given
\begin{mathpar}
  f : B \to \interval \otimes A
  \and
  g : C \to \interval \otimes B
  \and
  h_i : D \to \interval \otimes C
\end{mathpar}
in \ealg\ for $i = 1,2$ with $A,B,C,D \in \balg$ such that the two composites
\begin{displaymath}
  (h_i \otimes 1_C) \circ \copymor_C \circ g \circ f : A \to D \otimes C
\end{displaymath}
in \bker\ agree.
We are required to show that the two composites
\begin{displaymath}
  (((h_i \otimes 1_C) \circ \copymor_C \circ g) \otimes 1_B) \circ \copymor_B \circ f : A \to D \otimes C \otimes B
\end{displaymath}
are equal.
The latter are actually maps $D \otimes C \otimes B \to \interval \otimes A$, hence it suffices to check equality on generators of the form $d \otimes c \otimes b$.
By decomposing $c$, we can assume that $c$ is `small' enough that both $h_i(d) \in \interval \otimes C$ are sums of the form $\sum_i \alpha_i x_i$ with, for each $i$, either $c \leq x_i$ or $c \wedge x_i = 0_C$.
In terms of $\riesz(C)$, this says that both $h_i(d)$ are constant on the clopen subset of $S(C)$ corresponding to $c$.
It follows that the maps $(h_i \otimes 1_C) \circ \copymor_C : C \to D \otimes C$ send, as functions $D \otimes C \to \interval \times C$, the element $d \otimes c$ to $\xi_i \otimes c$ for some $\xi_i \in [0,1]$.
By hypothesis, either $g \circ f : A \to C$ sends, as a function $C \to \interval \otimes A$, the element $c$ to $0$ or $\xi_1 = \xi_2$.
In either case it is clear to see that the required equation holds.

\paragraph{Kolmogorov products}

Recall that \balg\ is the closure under filtered colimits of the representable functors in $[\fin,\set]$, and that by construction the Day tensor preserves colimits in each argument.
It follows that the inclusion $\balg \hookrightarrow \ealg$ preserves filtered colimits, as does the (strict) monoidal functor $\balg \to \bker\op$, and hence that \bker\ has all infinite tensor products of all small families, given by the coproduct in \balg\ of the underlying objects.
Obviously, the marginals are deterministic, i.e.\ \balg\ has all Kolmogorov products.

\paragraph{See also }Lorenzin and Zanasi, \emph{Approaching the Continuous from the Discrete: an Infinite Tensor Product Construction}, 2025 \cite{lorenzin-zanasi}.

\bibliographystyle{plain}
\bibliography{../../papers,../../books}

\end{document}